\newtheorem{theorem}{Theorem}
\newtheorem{lemma}{Lemma}[section]
\newtheorem{assertion}{Assertion}
\newtheorem{corollary}{Corollary}
\newtheorem{remark}{Remark}
\journal{Topology and its Applications}
\begin{document}

\begin{frontmatter}



\title{Topological classification of Liouville foliations for the Kovalevskaya integrable case on the Lie algebra so(3, 1)}


\author{V. Kibkalo}
\ead{slava.kibkalo@gmail.com}
\address{Moscow State University,\\ Faculty of Mechanics and Mathematics,  Russia}

\begin{abstract}
In this paper, we study the topology of the Liouville foliation of an analogue
of the Kovalevskaya integrable case on the Lie algebra $\mathrm{so} (3,1) $. The Fomenko-Zieschang invariants  (i.e., marked molecules) of a given foliation on each regular isoenergy surface were calculated.

Bibliography: 17 titles.
\end{abstract}

\begin{keyword}
Hamiltonian system \sep Kovalevskaya integrable case \sep Lie algebra $\mathrm{so} (3,1) $ \sep  Liouville foliation \sep  Fomenko--Zieschang invariant


\MSC[2010] 37J35\sep 70H33
\end{keyword}

\end{frontmatter}



\section{Introduction}

In this paper we study topological properties of an integrable generalization of the classical Kovalevskaya system in rigid body dynamics found by I.\,V.~Komarov  in \cite{Komarov81}. In that paper the classical Kovalevskaya top discovered by S. Kovalevskaya \cite{Kowalewski1889Acta12}, \cite{Kowalewski1889Acta14}, which is an integrable case of the Euler equations on the Lie algebra $\operatorname{e}(3)$, was included in a one-parameter family of integrable Hamiltonian systems on the pencil of Lie algebras $\operatorname{so}(4)-\operatorname{e}(3)-\operatorname{so}(3, 1)$. The Kovalevskaya top has been studied by many authors from various points of view, in particular its topology was studied in detail by M.\,P.~Kharlamov (see e.g. \cite{Kharlamov}, \cite{Kh88}). An important question of topological analyses of an integrable system is the study of its Liouville foliation. The topology of Liouville foliation for the Kovalevskaya top was completely described in \cite{BFR}. The same results for the Kovalevskaya case on  $\operatorname{so}(4)$ were obtained in \cite{Kozlov14}, \cite{LJM} and \cite{Sb}.

In this paper we generalize the results of \cite{BFR} to the analogue of the Kovaleskaya system on $\operatorname{so}(3, 1)$ using the results of M.\,P.~ Kharlamov, P.\,E.~Ryabov and A.\,Yu.~Savushkin \cite{Ryabov16}. More precisely, we calculate all the marks of the rough molecules found in \cite{Ryabov16}, thus obtaining all the Fomenko-Zieschang invariants of the system. All the necessary information about the Fomenko theory on the topological analysis of integrable Hamiltonian systems used in this paper can be found in \cite{BF}.

\section{Kovalevskaya case and its analogues}

The classical Kovalevskaya integrable case in rigid body dynamics was included by I.V. Komarov \cite{Komarov81} in a one-parameter family of dynamical systems on the pencil of Lie algebras $\operatorname{so}(4)-\operatorname{e}(3)-\operatorname{so}(3, 1)$. The Lie--Poisson brackets have the form \begin{equation} \label{Eq:Poisson_Lie_bracket_kappa} \{J_i, J_j\} = \varepsilon_{ijk}J_k, \quad \{J_i, x_j\} = \varepsilon_{ijk}x_k,
\quad \{x_i, x_j\} = \varkappa \varepsilon_{ijk}J_k, \end{equation}
where $\varepsilon_{ijk}$ is the permutation symbol and $\varkappa \in \mathbb{R}$. The cases $\varkappa>0, \varkappa =0$ and $\varkappa <0$ correspond to the Lie algebras $\operatorname{so}(4), \operatorname{e}(3)$ and $\operatorname{so}(3, 1)$ respectively.

These systems define integrable Hamiltonian systems with two degrees of freedom on every regular level surface  $M^4_{a, b} = \left\{ f_1 =a, f_2 = b\right\} $ of the Casimir functions of the brackets \eqref{Eq:Poisson_Lie_bracket_kappa}:
\begin{equation}\label{Eq:Kasimirs}  f_1 = (x_1^2 + x_2^2 + x_3^2) + \varkappa (J_1^2 +J_2^2 +J_3^2),
\qquad  f_2 = x_1 J_1 + x_2 J_2 +x_3 J_3.
\end{equation} The Hamiltonian $H$ and the first integral $K$ have the form \begin{equation}
    \label{Eq:Hamiltonian}
    H = J_1^2 + J_2^2 + 2J_3^2 + 2 c_1 x_1,
\end{equation}
\begin{equation}\label{Eq:First_Integral} K = (J_1^2 - J_2^2-2c_1 x_1 + \varkappa
c_1^2)^2 + (2J_1 J_2 - 2 c_1 x_2)^2.
\end{equation} Here $c_1$ is an arbitrary constant. We can assume that $c_1 = 1$ and $\varkappa \in \{-1, 0, 1\}$ (see e.g. \cite{Kozlov14}).

We will derive some information about the Kovalevskaya case on $\operatorname{so}(3, 1)$ from some other integrable systems on $\operatorname{e}(3)$ studied earlier. Essentially, we use a Poisson diffeomorphism between the open subsets of $\operatorname{e}(3)^*$ and $\operatorname{so}(3,1)^*$ described in \cite{Tsiganov03} to identify the Kovalevskaya case on $\operatorname{so}(3, 1)$ with the Kovalevskaya--Sokolov case on $\operatorname{e}(3)$ studied in \cite{Ryabov16}. This diffeomorphism from \cite{Tsiganov03} can be described as follows.

\begin{theorem}[\cite{Tsiganov03}] \label{A:PoissonMap} Consider $\mathbb{R}^6(\mathbf{J}, \mathbf{y})$ with the Lie-Poisson bracket for $\operatorname{e}(3)$: \begin{equation} \label{Eq:Poisson_Lie_bracket_E3} \{J_i, J_j\} = \varepsilon_{ijk}J_k, \quad \{J_i, y_j\} = \varepsilon_{ijk}y_k, \quad \{y_i, y_j\} = 0. \end{equation} The Casimir functions of this bracket are $g_1 = \mathbf{y}^2$ and $g_2 = \left(\mathbf{J}, \mathbf{y} \right)$.  Fix $\varkappa< 0$, $\alpha \not =0$. Then in the new coordinates  $(\mathbf{J}, \mathbf{x})$ on $\mathbb{R}^6(\mathbf{J}, \mathbf{y}) \backslash \left\{g_1 =0 \right\} $, where \begin{equation} \label{Eq:Tsig_PoissDiffCoord} \mathbf{x} = \alpha \mathbf{y} + \sqrt{\frac{-\varkappa}{\mathbf{y}^2}} \mathbf{y} \times \mathbf{J}, \end{equation}   the Poisson bracket \eqref{Eq:Poisson_Lie_bracket_E3} takes the form \eqref{Eq:Poisson_Lie_bracket_kappa} of the Lie-Poisson bracket  for $\operatorname{so}(3,1)$. Thus  \begin{equation} \label{Eq:Tsig_PoissDiff} (\mathbf{J}, \mathbf{y}) \to (\mathbf{J}, \mathbf{x}) \end{equation} is a Poisson diffeomorphism between $\operatorname{e}(3)^* \backslash  \left\{ g_1=0, g_2=0 \right\}$ and $\operatorname{so}(3,1)^* \backslash  \left\{ f_1\leq 0, f_2=0 \right\}$. A non-singular orbit $M^4_{a, b} = \left\{ g_1 = a, g_2 = b\right\} $ of $\operatorname{e}(3)^*$ is identified with the orbit $M^4_{\alpha^2 a+ \varkappa \frac{b^2}{a}, \alpha b}$ of $\operatorname{so}(3,1)^*$.   \end{theorem}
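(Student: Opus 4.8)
The plan is to verify directly that the map $(\mathbf{J},\mathbf{y})\mapsto(\mathbf{J},\mathbf{x})$ defined by \eqref{Eq:Tsig_PoissDiffCoord} carries the bracket \eqref{Eq:Poisson_Lie_bracket_E3} into \eqref{Eq:Poisson_Lie_bracket_kappa}, to identify its domain and image as the stated open sets, and to check the matching of Casimir level surfaces. First I would observe that the $J$-component of the map is the identity, so $\{J_i,J_j\}=\varepsilon_{ijk}J_k$ is automatic. Since $\mathbf{x}$ is built linearly from the vector fields $\mathbf{y}$ and $\mathbf{y}\times\mathbf{J}$, which both transform as vectors under the coadjoint action (i.e. $\{J_i,(\mathbf{y})_j\}=\varepsilon_{ijk}(\mathbf{y})_k$ and $\{J_i,(\mathbf{y}\times\mathbf{J})_j\}=\varepsilon_{ijk}(\mathbf{y}\times\mathbf{J})_k$), and the scalar factor $\sqrt{-\varkappa/\mathbf{y}^2}$ is a function of the Casimir $g_1=\mathbf{y}^2$ (hence Poisson-commutes with every $J_i$), it should follow cleanly that $\{J_i,x_j\}=\varepsilon_{ijk}x_k$. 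The remaining relation $\{x_i,x_j\}=\varkappa\,\varepsilon_{ijk}J_k$ is the substantive computation.

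For that last bracket I would expand $\{x_i,x_j\}$ using bilinearity, splitting $\mathbf{x}=\alpha\mathbf{y}+\lambda(\mathbf{y})\,(\mathbf{y}\times\mathbf{J})$ with $\lambda=\sqrt{-\varkappa/\mathbf{y}^2}$, into three groups of terms: the $\{\alpha y_i,\alpha y_j\}$ term (which vanishes since $\{y_i,y_j\}=0$), the cross terms of the form $\{\alpha y_i,\lambda(\mathbf{y}\times\mathbf{J})_j\}$, and the term $\{\lambda(\mathbf{y}\times\mathbf{J})_i,\lambda(\mathbf{y}\times\mathbf{J})_j\}$. In computing these I would use the Leibniz rule to move $\lambda$ out (noting $\{y_i,\lambda\}=0$ and $\{(\mathbf{y}\times\mathbf{J})_i,\lambda\}$ contributes a correction through $\{(\mathbf{y}\times\mathbf{J})_i,\mathbf{y}^2\}$). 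The key identities are the brackets $\{y_i,(\mathbf{y}\times\mathbf{J})_j\}$ and $\{(\mathbf{y}\times\mathbf{J})_i,(\mathbf{y}\times\mathbf{J})_j\}$, both computable from \eqref{Eq:Poisson_Lie_bracket_E3}; I expect the cross terms and the quadratic term to recombine, with the factor $\lambda^2=-\varkappa/\mathbf{y}^2$ cancelling the $\mathbf{y}^2$ denominators, leaving exactly $\varkappa\,\varepsilon_{ijk}J_k$. I anticipate this recombination — keeping careful track of which $\mathbf{y}^2$ factors come from $\lambda^2$ versus from the vector triple products — to be the main obstacle, since antisymmetrization in $i,j$ must be used to discard symmetric pieces.

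Having established that the bracket transforms correctly, I would then argue that \eqref{Eq:Tsig_PoissDiffCoord} is a diffeomorphism on the stated domain. The map is smooth on $\{g_1\neq0\}$ because of the $\lambda=\sqrt{-\varkappa/\mathbf{y}^2}$ factor; to invert it I would solve \eqref{Eq:Tsig_PoissDiffCoord} for $\mathbf{y}$ in terms of $(\mathbf{J},\mathbf{x})$, using the decomposition of $\mathbf{x}$ into its component along $\mathbf{y}$ and its component in the plane orthogonal to $\mathbf{y}$ spanned by $\mathbf{y}\times\mathbf{J}$. Finally I would compute the images of the Casimirs under the substitution: a direct calculation should give $f_1=(\mathbf{x},\mathbf{x})+\varkappa\,\mathbf{J}^2=\alpha^2\,\mathbf{y}^2+\varkappa\,(\mathbf{J},\mathbf{y})^2/\mathbf{y}^2$ and $f_2=(\mathbf{x},\mathbf{J})=\alpha\,(\mathbf{y},\mathbf{J})$, where the cross term $\alpha(\mathbf{y},\mathbf{y}\times\mathbf{J})=0$ and the triple-product terms in $f_1$ simplify via $|\mathbf{y}\times\mathbf{J}|^2=\mathbf{y}^2\mathbf{J}^2-(\mathbf{y},\mathbf{J})^2$. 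Evaluating on $g_1=a$, $g_2=b$ then yields the orbit correspondence $M^4_{a,b}\mapsto M^4_{\alpha^2 a+\varkappa b^2/a,\ \alpha b}$, and the exclusion of $\{g_1=0,g_2=0\}$ on the source and $\{f_1\leq0,f_2=0\}$ on the target records precisely where the coordinate change degenerates.
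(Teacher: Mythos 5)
Your proposal is essentially correct, but note that the paper itself gives no proof of this statement: it is imported verbatim from the cited reference (Komarov--Sokolov--Tsiganov), so there is no in-paper argument to compare against. Your direct verification is nevertheless sound and complete in outline. Two small refinements: (i) in the bracket $\{x_i,x_j\}$ the cross terms do not merely ``recombine'' with the quadratic term --- they vanish outright, since $\{y_i,(\mathbf{y}\times\mathbf{J})_j\}=y_iy_j-\delta_{ij}\mathbf{y}^2$ is symmetric in $i,j$, and moreover $\{(\mathbf{y}\times\mathbf{J})_i,\mathbf{y}^2\}=0$, so the scalar $\lambda=\sqrt{-\varkappa/\mathbf{y}^2}$ passes through every bracket without corrections; the entire content is then $\{(\mathbf{y}\times\mathbf{J})_i,(\mathbf{y}\times\mathbf{J})_j\}=-\mathbf{y}^2\,\varepsilon_{ijk}J_k$, which the factor $\lambda^2$ converts to $\varkappa\,\varepsilon_{ijk}J_k$; (ii) for invertibility it is cleaner to observe that, once $g_1$ is recovered from $(f_1,f_2)$ by solving the quadratic $\alpha^2g_1^2-f_1g_1+\varkappa g_2^2=0$ (which has a unique positive root since $\varkappa<0$), the relation $\mathbf{x}=(\alpha I-\lambda[\mathbf{J}]_\times)\mathbf{y}$ is linear in $\mathbf{y}$ with determinant $\alpha(\alpha^2+\lambda^2\mathbf{J}^2)\neq0$, so $\mathbf{y}$ is determined uniquely. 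Your Casimir computation and the identification of the excluded sets on both sides are correct as stated.
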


The Hamiltonian \eqref{Eq:Hamiltonian} takes the form \eqref{Eq:Ham_Sok_linear} of the Kovalevskaya--Sokolov case from \cite{Ryabov16} in the coordinates $(\mathbf{J}, \mathbf{y})$ given by \eqref{Eq:Tsig_PoissDiffCoord}
\begin{equation} \label{Eq:Ham_Sok_linear} H_1 = J_1^2 +  J_2^2 + 2 J_3^2 +2 c_1 y_1 +2 c_2 (y_2 J_3 - y_3 J_2) \end{equation}
for some new constants $c_1, c_2$.
As it was noted in \cite{Tsiganov03} the Kovalevskaya-Sokolov case can be included in a wider family of integrable systems with Hamiltonian \eqref{Eq:Ham_Sok_old}  \begin{equation}\label{Eq:Ham_Sok_old} H_\varkappa = J_1^2 + J_2^2 + 2 J_3^2 +2 c_1 y_1 - 2 c_2 J_3 y_2 - c_2^2 y_3^2 + 2 c_3 (J_3 + c_2 y_2)\end{equation}
 after the following change of coordinates
\[  J_2 \to  J_2 - c_2 y_3, \quad J_3 \to  J_3 + c_2 y_2. \]

The family of Hamiltonians \eqref{Eq:Ham_Sok_old} is  integrable for all values of parameters $c_1, c_2, c_3$ and for all values $a, b$ of Casimir functions $f_1, f_2$ and the parameter $\varkappa \in \mathbb{R}$ of the brackets \eqref{Eq:Poisson_Lie_bracket_kappa} (see \cite{Tsiganov03}). It includes the well-known Hamiltonians of the Kovalevskaya case (for  $c_2 = c_3 =0$), the Kovalevskaya--Yehia case (for $c_2 =0$) and the Sokolov case (for $c_1 = c_3 =0$). We will use some information about the Fomenko--Zieschang invariants for the Sokolov case from  \cite{Morozov04}.

\begin{remark} Note that the first integral $\widetilde{K_s}$ for the the family \eqref{Eq:Ham_Sok_linear} for $c_1=0$ from \cite{Ryabov16} is differs from the first integral of the Sokolov case $K_s$, written in \cite{Sokolov01} and \cite{Ryabov03}, but they are related by the formula \begin{equation} \label{Eq:Kov_type_for_Sok}
K_s = -4 \widetilde{K}_s + \cfrac{(2 H_s - c_2^2 f_1)^2}{4} - c_2^2 f_2^2.
\end{equation} \end{remark}

\section{Bifurcation diagrams of the Kovalevskaya case on $\operatorname{so}(3, 1)$}

Let us start by constructing the bifurcation diagrams for the Kovalevskaya case on $\operatorname{so}(3, 1)$.  It can be done similarly to \cite{Kozlov14}, where the case of $\operatorname{so}(4)$ was considered. Note that the majority of formulae from \cite{Kozlov14} still holds for $\varkappa<0$.  We deal separately with cases $b=0$ and $b\not =0$.

\subsection{Case of zero area integral ($b=0$)}

Bifurcation diagram $\Sigma$ of the Kovalevskaya integrable case is contained in the union of bifurcation curves on $Ohk$ plane for $\varkappa \ne 0$ given by the same formulae as in \cite{Kozlov14}. We say that two bifurcation diagrams are \textit{structurally different} if they contain different set of arcs of these curves or have different bifurcations in the $\mathfrak{F} = (H, K)$-preimage of these arcs.

\begin{theorem}
A bifurcation diagram of the Kovalevskaya integrable case on $so(3, 1)$ ($\varkappa <0$) for the zero area integral $f_2$ (i.e. $b =0$) is determined by the value $a$ of the Casimir function $f_1$. Diagrams for the following six intervals XII-XVII of the $Oa$ axis are shown in Fig. \ref{Fig:bif_diagrams_so31}. Here $4a_0: = \varkappa^2 c_1^2$. %
\[Oa: \quad -\infty,\, \mathrm{XII},\, -4a_0,\, \mathrm{XIII},\, -a_0,\, \mathrm{XIV},\, 0,\, \mathrm{XV},\, a_0,\,  \mathrm{XVI},\, 4a_0,\,  \mathrm{XVII},\, +\infty.\]
\end{theorem}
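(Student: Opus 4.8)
The plan is to realize the bifurcation diagram $\Sigma$ as a union of arcs of the explicit bifurcation curves in the $Ohk$-plane whose formulae, as noted above, coincide with those of \cite{Kozlov14} and remain valid for $\varkappa<0$, and then to track how the mutual arrangement of these arcs changes as the parameter $a$ runs over $\mathbb{R}$ with $b=0$ fixed. First I would write out these curves as parametrized families depending on $a$ and $\varkappa$; setting $b=0$ produces an additional involutive symmetry that collapses several generic branches and makes the admissible parameter ranges explicit. Since for $\varkappa<0$ the Casimir $f_1=\mathbf{x}^2+\varkappa\mathbf{J}^2$ is indefinite, the orbits $M^4_{a,0}$ are non-compact and non-empty for every $a\in\mathbb{R}$; consequently each candidate branch must be tested to determine which of its arcs actually carries critical values of $\mathfrak{F}=(H,K)$ restricted to $M^4_{a,0}$, and it is this admissibility, rather than the analytic form of the curves, that varies from interval to interval.

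Next I would locate the values of $a$ at which the combinatorial type of the arrangement changes. A structural transition (in the sense defined just before the theorem) occurs exactly when two of the curves become tangent or cross, when a cusp or a branch endpoint meets another arc, or when an arc enters or leaves the admissible region. Each such event is governed by the vanishing of a discriminant or by an intersection condition which, after elimination of the auxiliary parameters, reduces to a polynomial equation in $a$ and $\varkappa$. I expect these conditions to factor through the quantity $\varkappa^2c_1^2$, producing precisely the five transition values $-4a_0,\,-a_0,\,0,\,a_0,\,4a_0$ with $4a_0=\varkappa^2c_1^2$; checking that no further transitions occur, so that the list of six intervals is complete, is the crux of this step.

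For the range $a>0$ I would cross-check the direct computation against known results via Theorem~\ref{A:PoissonMap}: the Poisson diffeomorphism \eqref{Eq:Tsig_PoissDiff} identifies the orbits $M^4_{a,0}$ with $a>0$ of $\operatorname{so}(3,1)^*$ with orbits of $\operatorname{e}(3)^*$ on which the Hamiltonian takes the Kovalevskaya--Sokolov form \eqref{Eq:Ham_Sok_linear}, whose bifurcation data are available from \cite{Ryabov16}. Note, however, that this diffeomorphism excludes the locus $\{f_1\le 0,\ f_2=0\}$, so the intervals corresponding to $a<0$ lie outside its reach and must be handled by the direct analysis of the first two steps. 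This is the principal obstacle, since it is exactly in the non-compact region not transported by \eqref{Eq:Tsig_PoissDiff} that new arcs may appear and the admissibility tests are most delicate.

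Finally I would assemble, for each of the six intervals, the list of admissible arcs together with the bifurcation of the Liouville tori over each arc, and exhibit for any two neighbouring intervals a discrepancy — either a differing set of arcs or a differing bifurcation in the $\mathfrak{F}$-preimage of a common arc. This confirms that the diagrams XII--XVII are pairwise structurally different and that, for $b=0$, the value $a$ of $f_1$ alone determines the diagram, which is the assertion of the theorem.
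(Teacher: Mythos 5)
Your route is genuinely different from the paper's, and as written it is a plan rather than a proof. The paper does not redo the curve-by-curve analysis of \cite{Kozlov14} for $\varkappa<0$. Instead it (i) obtains $\Sigma_{a,0}$ as the limit of the already-known diagrams $\Sigma_{a_n,b_n}$ with $b_n\neq 0$ constructed in \cite{Ryabov16}, the passage to the limit $(a_n,b_n)\to(a,0)$ being justified by compactness of the sets $\{H\le h_0\}$ inside the orbits (so limits of singular values are singular values); (ii) reads off the number, types and nondegeneracy of the rank-$0$ points for each $a$ from Assertions 15--17 of \cite{Kozlov14}; (iii) deduces from these the number of critical circles over each arc and the torus counts in each chamber; and (iv) settles one remaining ambiguity over the arc $\gamma_{10}$. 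Note also that your reading of Theorem~\ref{A:PoissonMap} is too pessimistic in one respect and too optimistic in another: the excluded locus is $\{f_1\le 0,\ f_2=0\}$, so \emph{every} orbit with $b\neq 0$, including those with $a<0$, is carried to a Kovalevskaya--Sokolov orbit, and the limit $b\to 0$ therefore reaches the intervals XII--XIV as well --- the ``principal obstacle'' you single out is exactly what the paper's limiting argument circumvents; conversely, $b=0$ corresponds to the boundary $l_*=0$ of the parameter regions of \cite{Ryabov16}, so even for $a>0$ the data there cannot simply be quoted but must be approached through the limit.

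The concrete gap is that you never explain how the bifurcation type over each admissible arc is determined, although ``structurally different'' is defined to include these types and this is where the only genuinely new difficulty sits. Discriminant and tangency analysis of the curves in the $Ohk$-plane, together with the count of rank-$0$ points, does not distinguish the atom $C_2$ from $D_2$ over the new arc $\gamma_{10}$: both are consistent with a center--saddle singularity $w_{10}$ having two rank-$0$ points on a connected singular fibre. The paper needs a separate argument here --- the competing atom would split under perturbation into two atoms $B$ forcing three connected components of the intermediate level $\{K=k\}$, whereas the system has only one, hence the atom is $C_2$ (this is also what makes the Corollary about the splitting $B\times C_2\to 2\,(B\times B)$ meaningful). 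Without this step, or an equivalent count of connected components of the singular fibres, your final assembly of ``the bifurcation of the Liouville tori over each arc'' is unjustified precisely for the arcs $\gamma_{10}$, $\gamma_{11}$ that have no analogue on $\operatorname{e}(3)$.
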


\begin{figure}
\minipage{1.1\textwidth}
\includegraphics[width=\linewidth]{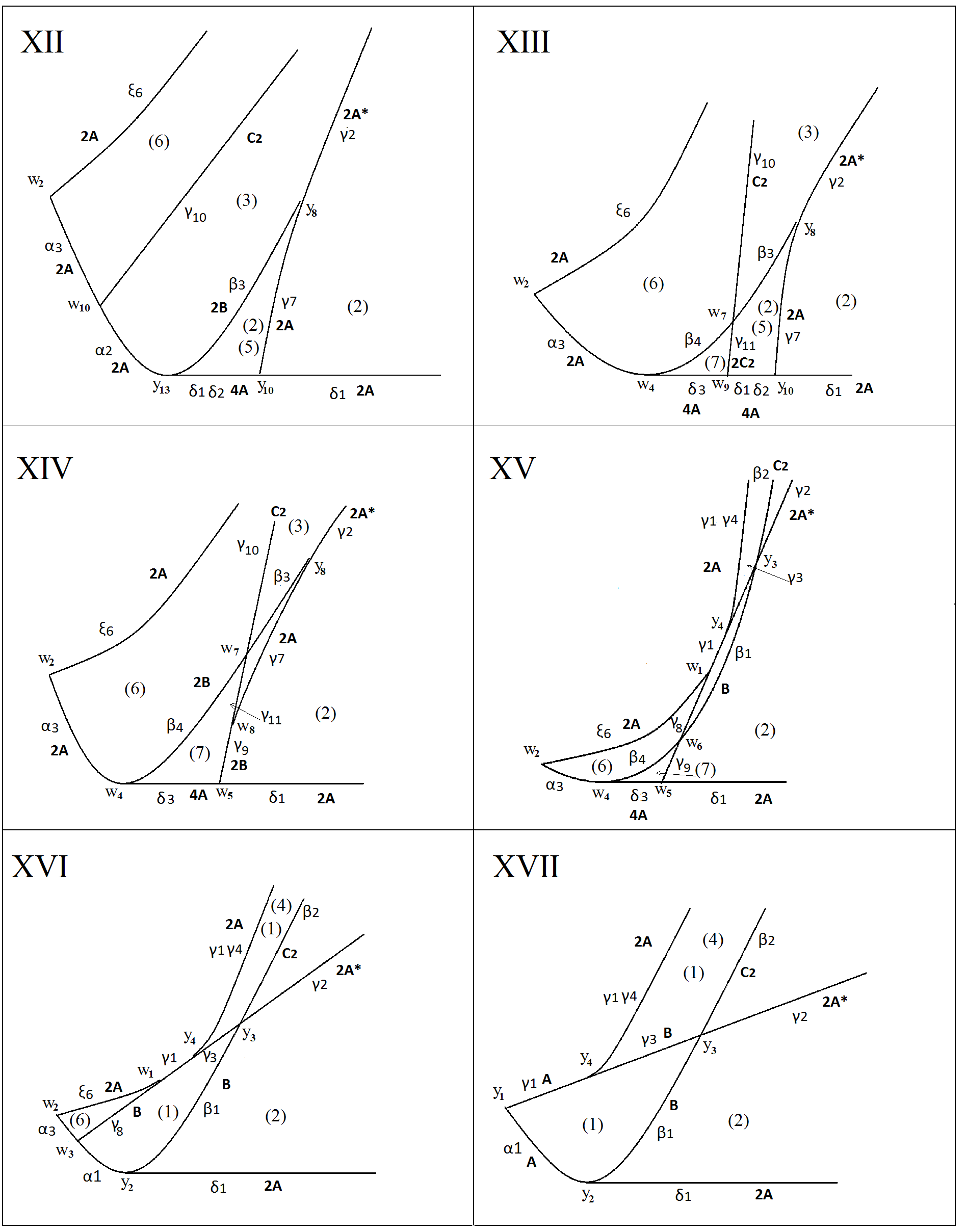}
   \caption{Bifurcation diagrams of Kovalevskaya $so(3, 1)$ case for $b =0$} \label{Fig:bif_diagrams_so31}
\endminipage
\end{figure}

Let us denote and enumerate the arcs of $\Sigma$ by the same way as in \cite{BFR}, \cite{LJM}. The letter in its notation is determined by the bifurcation curve and numbers should be different. All necessary information about the ``new'' arcs (i.e. arcs without analogues in the Kovalevskaya case on $\mathrm{e}(3)$) is contained in the Table \ref{Tab:new_arcs}. Let us call other arcs ``old'' arcs.  We also specify the Liouville tori families and the number of tori in the preimage of a point above and under an arc.

\begin{table}[h]
\centering
\begin{tabular}[t]{|c|c|c|c|c|c|c|c|c|c|c|c|c|c|}
\hline
  new & atom & higher & lower &  arc's endpoints & reg & $b =0$\\
\hline
 $\xi_6$ & $2A$ & $2T^2$ & $\oslash$   &  $w_2; \{w_1, \infty\}$ & V-XI & XII-XVI \\
\hline
 $\alpha_3$ & $2A$ & $\oslash$ & $2T^2$  & $w_2; \{w_3, w_4, w_{10}\}$ & V-XI & XII-XVI \\
\hline
 $\beta_4$ & $2B$ & $4T^2$ & $2T^2$  &  $w_4; \{w_6, w_7\}$ & IX-XI & XIII-XV \\
\hline
 $\delta_3$ & $4A$ & $\oslash$ & $4T^2$  & $w_4; \{w_5, w_9\}$  & IX-XI & XIII-XV \\
\hline
 $\gamma_8$ & $B$ & $T^2$ & $2T^2$  & $w_1; \{w_3, w_6\}$ & V-XI & XV-XVI \\
\hline
 $\gamma_9$ & $2B$ & 4T & 2T  & $w_5; \{w_6, w_8\}$ & IX-XI & XIV-XV \\
\hline
 $\gamma_{10}$ & $C_2$ & 2T & 2T  & $\{w_{10}, w_7\}; \infty$ & - & XIII-XIV \\
\hline
 $\gamma_{11}$ & $2C_2$ & 4T & 4T  & $w_8; w_7$ & - & XIII-XIV \\
\hline
\end{tabular}
\caption{New arcs of $\Sigma$ for the Kovalevskaya case,  $\varkappa <0$}
\label{Tab:new_arcs}
\end{table}

The preimage of singular points of $\Sigma$ (i.e. points of intersection, tangency and return points of bifurcation curves) contains all critical points of $\mathfrak{F}$ with $\mathrm{rk} = 0$ and circles that consist of degenerate critical points of $\mathrm{rk} = 1$.

\begin{remark} All critical points of $\mathrm{rk} = 1$ except for the preimages of $w_1, w_4, w_8$ are nondegenerate, see Assertions 6-8 in \cite{Kozlov14}.
\end{remark}

We use notations $y_i$ from \cite{Kozlov14} for an ``old'' point and $w_i$ for a ``new'' one according to Table \ref{Tab:new_points}. In that table we also specify the rank, number of orbits in the preimage and the loop molecule of the singularity. We also include in Table \ref{Tab:new_points} the notations of these points from \cite{Ryabov16} and classes from \cite{Kozlov14}.

Nondegeneracy of critical points of $\mathrm{rk} =0$ can be easily checked as in~\cite{Kozlov14}.
\begin{assertion}
All critical points of $\mathrm{rk} = 0$ are nondegenerate in an orbit $M^4_{a, 0}$ for $a \notin \{\pm \varkappa^2 c_1^2, \pm \cfrac{\varkappa^2 c_1^2}{4}, 0\}$,\, $\varkappa <0$. Singular points $w_2, w_3, w_5, w_6, w_7, w_9, w_{10}$ are their $\mathfrak{F}$-images in $\Sigma_{a, 0}$. Nondegenerate singularities of $\mathrm{rk} = 0$ are represented in Table \ref{Tab:new_points} as a product of two foliated 2-atoms (see \cite{BF}). We also specify their regions and intervals of $Oa$ for which a point $w_i$ belongs to $\Sigma_{a, 0}$.
\end{assertion}
\begin{table}[h]
\centering
\begin{tabular}[t]{|c|c|c||c|c|c|c|c|c|c|c|c|c|c|}
\hline
name & \cite{Ryabov16} & \cite{Kozlov14} & $rk$ & \# & loop molecule. & $b \ne 0$ & $Oa$\\
\hline
$w_1$ & $\gamma_{ij}$ & $rt$ & $1$ & 1 & ell. pitchfork   &  V-XI & XV-XVI \\
\hline
$w_2$ & $\delta_4$ & $h_{int}$ & $0$  & 2 & 2 $A \times A$   &   V-XI & XI-XVI \\
\hline
$w_3$ & $\delta_{12}$ & $+l$  & $0$ &  1 & $A \times B$  &   V-VIII & XVI \\
\hline
$w_4$ & $\gamma_{ij}$ & $h_l$ & $1$ &  2 & 2 ell. pitchfork  &   IX-XI & XIII-XV \\
\hline
$w_5$ & $\delta_{36}$ & $r_3$ & $0$ &  2 & 2 $A \times B$  &   IX-XI & XIV-XV \\
\hline
$w_6$ & $\delta_{13}$ & $+l$ & $0$ &  1 & $B \times B$  &   IX-XI & XV \\
\hline
$w_7$ & - & $+l$ & $0$  & 2 & $B \times C_2$ &   - & XIII-XIV \\
\hline
$w_8$ & - & $z_{cusp}$ & $1$ & 2 & 2 hyp. pitchfork   &   - & XIV \\
\hline
$w_9$ & - & $r_{3, l}$ & $0$ & 4 & 2 $A \times C_2$ &   - & XIII \\
\hline
$w_{10}$ & - & $+l$  & $0$ & 2 & $A \times C_2$ &   - & XII \\
\hline
\end{tabular}
\caption{New singular points of $\Sigma$ for the Kovalevskaya case, $\varkappa <0$}
\label{Tab:new_points}
\end{table}

\begin{remark}
Loop molecules of singular points $w_1, w_4, w_8$ conicide with loop molecules of typical degenerate singularities of the $\mathrm{rk} = 1$: elliptic pitchfork ($w_1, w_4$) and hyperbolic pitchfork ($w_8$), see \cite{BFR}.
\end{remark}

\begin{proof}
1)  The bifurcation diagram $\Sigma_{a, 0}$,  i.e. the image of a critical set in an orbit $M^4_{a, 0}$, can be obtained from diagrams $\Sigma_{a_n, b_n}$  for orbits $M^4_{a_n, b_n}$ constructed in \cite{Ryabov16} by passing to the limit $(a_n, b_n) \rightarrow (a, 0)$. This follows from the compactness of the following set $A$ for fixed $\varepsilon <0$ and $h_0$: \[A = \{(\textbf{J}, \textbf{x})\,|\,\, H = h < h_0, \,\,f_1  = a < -\varepsilon <0\}.\] In particular, if a sequence of singular points $x_n \in \Sigma_{a_n, b_n}$ has a finite limit $x$ when $(a_n, b_n) \rightarrow (a, 0)$, then $x \in \Sigma_{a, 0}$.

2) For each value $a$ of $f_1$ it is easy to determine the number and type of singular points of rank  $0$, based on Assertions 15-17 of \cite{Kozlov14}. At the same time, their nondegeneracy was checked and the types were determined: center-center, center-saddle, and saddle-saddle.

3) Since the critical set is closed, and every isoenergy regular manifold $Q^3_{a, b, h}$ is compact, the critical points of rank  $1$ in the preimage of an every non-singular point of bifurcation diagram $\Sigma$ form one circle or several circles.

The number of critical circles in the preimage of arc's points of a diagram and the number of tori in areas of the $Ohk$  plane are uniquely determined from the number and type of singular points of rank $0$ in the preimages of the singular points $w_i$ of diagrams $\Sigma_{a, 0}$.

4) The only remaining question is the type of bifurcation $X$  in the preimage of the arc $\gamma_{10}$.  Since the singularity $w_{10}$ is of center-saddle type with two critical points of rank $0$ and the singular fiber of the atom $X$ is connected, either $X = C_2$ or $X = D_2$.

After a perturbation the atom $D_1$ splits into two atoms $B$  and every level $K=k$ between them must have three connected components. At the same time, the Kovalevskaya system has only one component at a such level. Thus $X = C_2$. \end{proof}
\begin{corollary} Therefore, a decomposition of the saddle-saddle singularity $B \times C_2$ into two singularities $B \times B$  described in \cite{OshTuz18} is realized in the Kovalevskaya system on $\operatorname{so}(3, 1)$. \end{corollary}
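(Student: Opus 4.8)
The plan is to read the decomposition off the singularity data of the Theorem, using the zero-area level $b = 0$ as the symmetric, most degenerate member of the family and then perturbing to $b \neq 0$. The decisive feature recorded in Table~\ref{Tab:new_points} is that the saddle-saddle point $w_7$ of type $B \times C_2$ occurs only for $b = 0$ (its entry in the ``$b \neq 0$'' column is empty), while the saddle-saddle point $w_6$ of type $B \times B$ survives for $b \neq 0$. Hence the one-parameter deformation $b : 0 \to \pm\varepsilon$ is exactly a deformation that forces the singularity $B \times C_2$ to break up, and the remaining work is to identify the pieces and to match them with the model of \cite{OshTuz18}.

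First I would localize to a small saturated neighborhood of the singular fiber over $w_7$ inside an orbit $M^4_{a, 0}$ with $a$ in interval XIII or XIV, where by the Theorem the germ of the Liouville foliation is the direct product $B \times C_2$. The splitting mechanism is the one already invoked in step 4 of the proof of the Theorem: under a perturbation the atom $C_2$ splits into two atoms $B$. I would make this explicit for the $C_2$-factor, showing that switching on the area integral $b \neq 0$ destroys the discrete symmetry of the $b = 0$ level and resolves the $C_2$-factor into two $B$-atoms while the transverse $B$-factor is unaffected; thus the product germ $B \times C_2 \cong B \times (2B)$ splits into two germs of type $B \times B$, giving precisely the two singularities asserted.

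Second, I would confirm that the two rank-zero singularities produced in this way are genuinely of product type $B \times B$, and not some other saddle-saddle germ. As in step 4 of the Theorem this is a counting argument: I would compare the numbers of Liouville tori recorded in Table~\ref{Tab:new_arcs} for the arcs adjacent to $w_7$ that exist only at $b = 0$ (the arcs $\gamma_{10}, \gamma_{11}$ carrying the atoms $C_2, 2C_2$) with the data for the arcs that persist for $b \neq 0$ (the arcs $\beta_4, \gamma_9$ carrying $2B$), and check consistency with the number of connected components of the levels $\{K = k\}$ near the singular value. Matching these counts shows that each surviving factor is a $B$-atom.

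The main obstacle I anticipate is the final identification: showing that the deformation produced by the Kovalevskaya family realizes the particular decomposition of \cite{OshTuz18}, and not some a priori different splitting of $B \times C_2$. To settle this I would compare the loop molecule of $w_7$ from Table~\ref{Tab:new_points} with the loop molecule of the model decomposition in \cite{OshTuz18}, supplementing it if necessary by the local invariants of the two rank-zero singularities appearing after the perturbation; agreement of these data identifies the realized decomposition with the one of \cite{OshTuz18}.
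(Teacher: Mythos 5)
Your proposal is correct and follows essentially the same route as the paper: the corollary is stated there as an immediate consequence ("Therefore") of the theorem, resting precisely on the facts you isolate --- the singularity $w_7$ of type $B \times C_2$ exists only for $b = 0$ (intervals XIII--XIV), and switching on $b \neq 0$ splits the $C_2$-factor into two atoms $B$, leaving the $B \times B$ singularities present in regions IX--XI. The extra verifications you propose (torus counts, comparison of loop molecules with the model of \cite{OshTuz18}) are consistent with, though more detailed than, what the paper records.
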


\subsection{Connections with the Kovalevskaya--Sokolov case}

If $b\not =0$, then the Kovalevskaya case can be identified with the Kovalevskaya-Sokolov case, for which the bifurcation diagrams were constructed in \cite{Ryabov16}. We briefly recall some facts and notations from \cite{Ryabov16}. There every orbit $M^4$ was characterised by pair of numbers $(\zeta_{*}, l_{*})$.
There $l = \langle M, \alpha \rangle/2, \, \hat{a}^2 = \langle \alpha, \alpha \rangle$, where $\alpha = (\alpha_1, \alpha_2, \alpha_3)$ and $M = (M_1, M_2, M_3)$ are the coordinate and the velocity vectors in $\mathbb{R}^6$ for the Kovalevskaya--Sokolov integrable case. $l_{*}$ are $\zeta_{*}$ are the following:
\[l_{*}^3 = \cfrac{2 l^2}{\varepsilon_0 \hat{a}^3}, \quad \zeta_{*} = \cfrac{\hat{a} \zeta }{\varepsilon_0} = \cfrac{\hat{a} \varepsilon_1^2}{\varepsilon_0}.\]

The set $\left\{ l_{*} >0, \zeta_{*} >0\right\}$ is separated by five curves $\theta_1, \dots, \theta_5$ into 11 regions $1 \dots 11$, see Fig. \ref{Fig:regions_Kov_Sok}. Points from the same region have structurally equivalent bifurcation diagrams $\Sigma$ in the Kovalevskaya--Sokolov case on $\mathrm{e}(3)$.

Separating curves $f_r, f_k, f_m$ and two intervals $f_t'$ and $f_t''$ of the curve $f_t$ except for the points of the axis $Oa$ (i.e. without the endpoints of intervals XII-XVII) on the plane $Oab$ for the Kovalevskaya case on $so(3, 1)$  are the images of curves $\theta_1, \dots, \theta_5$ under the Poisson map from  Assertion~\ref{A:PoissonMap} (we specify the correspondence between these curves in Table~\ref{Tab:sep_set_corr}).

\begin{table}[h]
\centering
\begin{tabular}[t]{|c|c|c|c|c|c|c|c|c|c|c|c|c|c|}
\hline
Kovalevskaya--Sokolov case & $\theta_1$  & $\theta_2$  & $\theta_3$ & $\theta_4$ & $\theta_5$ \\
\hline
Kovalevskaya $so(3,1)$ case & $f_k$  & $f_t'$  & $f_t''$ & $f_r$ & $f_m$ \\
\hline
 Points on the axis $Oa$ & $-a_0$  & $-4a_0$ & $-4a_0, a_0$ & $0$ & $4a_0$ \\
\hline
\end{tabular}
\caption{Correspondence of separating curves}
\label{Tab:sep_set_corr}
\end{table}

Regions I-XI of Kovalevskaya case on $\mathrm{so}(3, 1)$ are the Poisson map image of regions 1-11 of Kovalevskaya-Sokolov case. Regions 1, 2, 3 and 4 for the latter case (i.e. regions I, II, III, IV for the former case respectively)  have analogues in the classical Kovalevskaya case ($\varkappa =0$). Let us enumerate other regions in the same way preserving the notation from \cite{BFR}.

\begin{figure}[]
\minipage{0.56\textwidth}
\includegraphics[width=\linewidth]{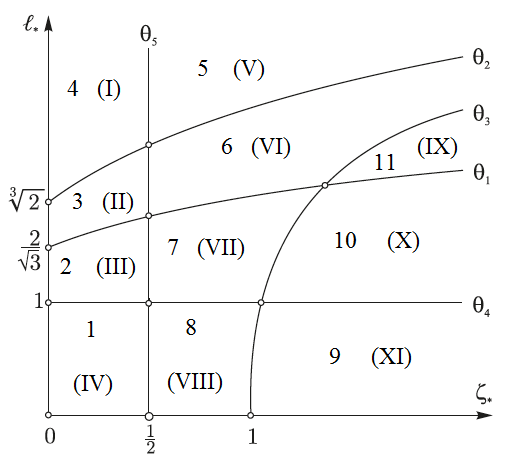}
   \caption{Kovalevskaya--Sokolov case} \label{Fig:regions_Kov_Sok}
\endminipage
\hspace{0.04\textwidth}
\minipage{0.40\textwidth}
\includegraphics[width=\linewidth]{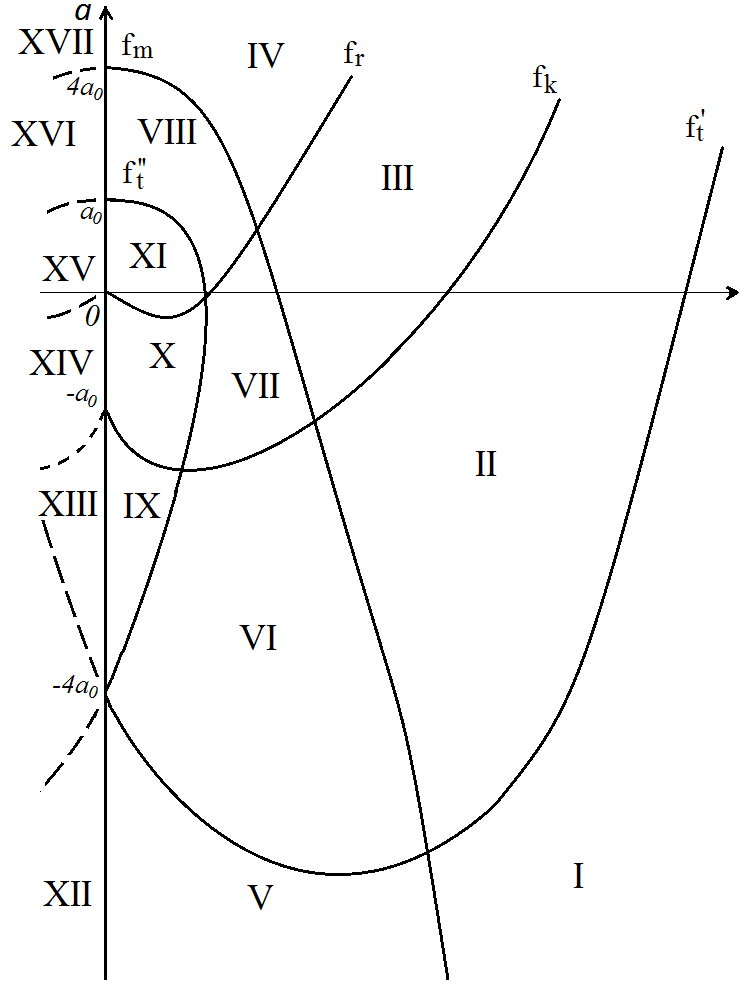}
   \caption{Kovalevskaya case on $so(3, 1)$} \label{Fig:regions_Kov_so31}
\endminipage
\end{figure}

\section{Liouville analysis of Kovalevskaya case on Lie algebra $\mathrm{so}(3, 1)$}

In this section we will calculate all the Fomenko-Zieschang invariants for every 3-dimensional regular isoenergy submanifold $Q^3$ of the Kovalevskaya integrable  case on $\mathrm{so}(3, 1)$. The list of rough molecules (see \cite{BF}) was constructed in \cite{Ryabov16}. It remains to find the marks of these molecules. We find them similarly to \cite{LJM} by explecitly expressing the admissible coordinate system (see \cite{BF}) for every ``new'' arc  of the bifurcation diagram through the uniquely defined $\lambda$-cycles of bifuractions.

We will use the information about some previously studied integrable cases, namely  the classical Kovalevskaya case and the Sokolov integrable case studied in \cite{Ryabov03} and\cite{Morozov04}. The first case is the limit of the Kovalevskaya case on $so(3, 1)$ when  $\varkappa \rightarrow -0$. And the second one is the limit of the Kovalevskaya--Sokolov case when $\varepsilon_0 \longrightarrow 0$  (see \cite{Ryabov16}).

\begin{lemma}
Regions 5, 10, 11 of the plane $(\zeta_{*}, l_{*})$ of parameters for the Kovalevskaya--Sokolov case are preserved after passing to limit $\varepsilon_0 \rightarrow +0$.
\end{lemma}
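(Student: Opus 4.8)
\emph{Proof proposal.} The plan is to reduce the statement to an asymptotic analysis in the $(\zeta_{*}, l_{*})$-plane, since the separating curves $\theta_1,\dots,\theta_5$ themselves have fixed equations there and the entire effect of $\varepsilon_0$ is hidden in the coordinate normalization. First I would record that $\varepsilon_0$ enters only through the displayed scalings and compute the combination $\zeta_{*}/l_{*}^3 = \hat{a}^4 \varepsilon_1^2 / (2 l^2)$, which is manifestly independent of $\varepsilon_0$. Hence for a fixed limiting Sokolov orbit (fixed $l,\hat{a},\varepsilon_1$) the image point $(\zeta_{*}, l_{*})$ runs to infinity as $\varepsilon_0 \to +0$ along the cubic $\zeta_{*} = C l_{*}^3$ with $C = \hat{a}^4 \varepsilon_1^2 / (2 l^2)$ fixed, since $l_{*}^3 = 2 l^2/(\varepsilon_0 \hat{a}^3) \to +\infty$. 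Thus the question of which regions survive the limit is exactly the question of which regions contain the far tails of the one-parameter family of cubics $\zeta_{*} = C l_{*}^3$ as $C$ ranges over its admissible positive interval.

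The combinatorial heart of the argument is then the following. I would take the explicit equations of $\theta_1,\dots,\theta_5$ from \cite{Ryabov16} and extract, for each, its leading asymptotics as $l_{*}\to+\infty$, comparing the resulting growth exponent with the cubic exponent $3$. Ordering the five curves by these asymptotics partitions a neighborhood of infinity into sectors, and I would verify directly that the tails of the cubics $\zeta_{*} = C l_{*}^3$ eventually lie in precisely the regions labelled $5$, $10$ and $11$, while every remaining region is either bounded or escapes to infinity only along directions growing strictly faster or strictly slower than $l_{*}^3$, and so is never entered in the limit. This identifies $5,10,11$ as exactly the regions reachable in the Sokolov limit.

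To upgrade this parameter-space statement to the asserted preservation of the Liouville data I would reuse the compactness mechanism of part 1) of the previous proof: on the $\operatorname{e}(3)$ side the orbits $M^4_{a,b}$ with $a = g_1 = \mathbf{y}^2 > 0$ and $H$ bounded above are compact, so as $\varepsilon_0 \to +0$ no critical point of $\mathfrak{F}$ can escape to infinity and the bifurcation diagram $\Sigma$ varies continuously. Consequently the rough molecule over a point of region $5$, $10$ or $11$ coincides with the rough molecule of the corresponding orbit of the limiting Sokolov system, whose Fomenko--Zieschang invariants are recorded in \cite{Morozov04}; this is the payoff that makes the lemma useful for the subsequent mark computations.

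The step I expect to be the main obstacle is the explicit asymptotic bookkeeping for the five curves $\theta_i$: their defining relations in \cite{Ryabov16} are given implicitly through discriminant and resultant conditions, so isolating the leading $l_{*}$-asymptotics and pinning down the precise incidence of the cubics with $5,10,11$ rather than with a neighbouring region requires a careful case analysis instead of one clean estimate. A secondary subtlety is excluding tangential contact at infinity between a cubic $\zeta_{*}=C l_{*}^3$ and a separating curve of the same order of growth, which would otherwise leave the limiting region ambiguous for exceptional values of $C$; here one must compare subleading terms and use that the admissible $C$ form an interval rather than a single value.
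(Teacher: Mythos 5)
Your opening reduction is correct and worth keeping: $\zeta_{*}/l_{*}^{3} = \hat{a}^{4}\varepsilon_1^{2}/(2l^{2})$ is indeed independent of $\varepsilon_0$, so a fixed Sokolov orbit escapes to infinity in the $(\zeta_{*}, l_{*})$-plane along a cubic $\zeta_{*} = C l_{*}^{3}$ as $\varepsilon_0 \to +0$, and ``preserved'' can reasonably be read as ``contains the far tails of these cubics.'' However, the step you yourself flag as the main obstacle --- extracting the asymptotics of $\theta_1,\dots,\theta_5$ at infinity and verifying that the cubic tails land precisely in regions $5$, $10$, $11$ and in no others --- is the entire content of the lemma, and you never carry it out. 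As written, the proposal establishes only that the image points run to infinity along cubics; which regions of the fixed partition absorb those tails is exactly what remains to be proved, so this is a genuine gap rather than a completed argument. Your third paragraph (compactness, continuity of $\Sigma$, matching of rough molecules) also goes beyond what the lemma asserts and is handled separately in the paper.

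The paper avoids the asymptotic bookkeeping entirely by running the limit in the opposite direction: instead of fixing the $(\zeta_{*}, l_{*})$-plane and letting the orbit escape, it rewrites the separating curves $\theta_1,\dots,\theta_5$ in the unnormalized parameters and passes to the limit $\varepsilon_0 \to +0$ there. The computation is then immediate: $\theta_1$ tends to the parabola $l = \hat{a}^{2}\varepsilon_1$, the curves $\theta_2$ and $\theta_3$ both tend to the single parabola $l = \hat{a}^{2}\varepsilon_1/2$, and $\theta_4$, $\theta_5$ tend to the coordinate axes $O\hat{a}$ and $Ol$. Every region pinched between two curves with a common limit collapses, and the regions surviving between the two distinct limiting parabolas and the axes are exactly $5$, $10$, $11$. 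If you want to salvage your version, you must do the equivalent work at infinity --- the case analysis of growth exponents of the $\theta_i$ and the exclusion of tangential contact between a cubic and a separating curve of the same order --- complications that the paper's choice of coordinates never encounters.
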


\begin{proof} Parabolas $l = \hat{a}^2 \varepsilon_1$ and $l =\hat{a}^2 \varepsilon_1/2$ are the limits of the separating curves $\theta_1$ and  $\theta_2, \theta_3$ respectively. Axes $O\hat{a}$ and $Ol$ are the limits of the curves $\theta_4$ and $\theta_5$ respectively.
\end{proof}

Some arcs that appear in both limiting cases have different notations. We will start with notations for the classical Kovalevskaya case from \cite{BFR} and will explain correspondences for other arcs.

\begin{theorem} \label{T:AdmCoord}
Admissible coordinate systems for ``new'' arcs $\alpha_3, \gamma_8, \gamma_9, \gamma_{10}, \gamma_{11}, \beta_4, \delta_3, \xi_6$ are expressed via the uniquely defined $\lambda$-cycles as follows:
\[
\alpha_3:\,\, \cfrac{(\lambda_{\alpha_3}, \lambda_{\beta_4})_{(6)}}{\oslash}\,, \quad
\beta_4:\,\, \cfrac{(\lambda_{\beta_4}, - \lambda_{\gamma_8})_{(6)}}{(\lambda_{\beta_4}, \lambda_{\gamma_9})_{(7)}}\,, \quad
\gamma_8:\,\, \cfrac{(\lambda_{\gamma_8}, \lambda_{\beta_4})_{(6)}}{(\lambda_{\gamma_8}, -\lambda_{\beta_1})_{(1)}}\,, \quad
\gamma_9:\,\, \cfrac{(\lambda_{\gamma_9}, \lambda_{\beta_4})_{(7)}}{(\lambda_{\gamma_9}, -\lambda_{\beta_1})_{(2)}}\,,
\]
\[
\gamma_{10}:\,\, \cfrac{(\lambda_{\gamma_{10}}, \lambda_{\beta_4})_{(6)}}{(\lambda_{\gamma_{10}}, -\lambda_{\beta_3})_{(3)}}\,, \quad
\gamma_{11}:\,\, \cfrac{(\lambda_{\gamma_{11}}, \lambda_{\beta_4})_{(7)}}{(\lambda_{\gamma_{11}}, -\lambda_{\beta_3})_{(2, 5)}}\,, \quad
\delta_3:\,\, \cfrac{(\lambda_{\delta_3}, \lambda_{\beta_4})_{(7)}}{\oslash}\,, \quad
\xi_6:\,\, \cfrac{\oslash}{(\lambda_{\xi_6}, \lambda_{\gamma_5})_{(6)}}\,.
\] The location of these $\lambda$-cycles on the fundamental lattice of Liouville tori is shown in Fig. \ref{Fig:cycles}.

\end{theorem}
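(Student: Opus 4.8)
The plan is to realise all the relevant $\lambda$-cycles as integer vectors on the fundamental lattice $H_1(T^2;\mathbb{Z}) \cong \mathbb{Z}^2$ of a Liouville torus in each region and then read off every admissible coordinate system directly from the resulting configuration (Fig.~\ref{Fig:cycles}). First I would fix, for every atom appearing in a new arc, its uniquely defined $\lambda$-cycle: for the elliptic atoms $A$ (arcs $\alpha_3, \delta_3, \xi_6$) this is the cycle that vanishes on the central circle of the solid torus, while for the saddle atoms $B$ and $C_2$ (arcs $\gamma_8, \gamma_9, \gamma_{10}, \gamma_{11}, \beta_4$) it is the cycle singled out by the separatrices of the corresponding $2$-atom; in each case uniqueness up to orientation is guaranteed by the general theory of \cite{BF}. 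This determines the first entry of every pair in the statement.

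Next I would compute each $\lambda$-cycle as an explicit vector by transporting known data along the limits of the two previously studied systems. For the arcs having analogues in the classical Kovalevskaya case I use the limit $\varkappa \to -0$ and the admissible coordinate systems already obtained in \cite{BFR}. For the arcs arising from regions $5, 10, 11$ I use the limit $\varepsilon_0 \to 0$ together with the preceding Lemma on the preservation of regions $5, 10, 11$, which guarantees that these three regions survive the degeneration and that their separating curves collapse to the explicit parabolas and coordinate axes listed there; this lets me identify the Sokolov-case $\lambda$-cycles of \cite{Ryabov03}, \cite{Morozov04} with ours through the Poisson diffeomorphism of Assertion~\ref{A:PoissonMap}. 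Following each cycle across the identification $M^4_{a,b} \cong M^4_{\alpha^2 a + \varkappa b^2/a,\, \alpha b}$ then expresses all the $\lambda$-cycles in one common basis.

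With every $\lambda$-cycle placed, the admissible coordinate systems follow by inspection. On each boundary torus the first basis vector is the $\lambda$-cycle of the atom bounded there, the numerator recording the frame on the lower torus family and the denominator the frame on the higher one, with $\oslash$ marking the contracted side of the elliptic atoms $\alpha_3, \delta_3, \xi_6$ where no boundary torus arises. The complementary cycle $\mu$ is taken to be $\pm$ the $\lambda$-cycle of the neighbouring bifurcation named in the pair, measured in the Kovalevskaya--Sokolov region indicated by the subscript. For each pair I then check that the two vectors span $H_1(T^2;\mathbb{Z})$, i.e.\ that their determinant equals $\pm 1$, which is exactly what the relative positions drawn in Fig.~\ref{Fig:cycles} exhibit; the prescribed signs on the entries $-\lambda_{\gamma_8}, -\lambda_{\beta_1}, -\lambda_{\beta_3}$ are then fixed by demanding that the resulting frame be positively oriented and admissible in the sense of \cite{BF}.

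The main obstacle is the consistent placement of the cycles for the genuinely new arcs $\gamma_{10}, \gamma_{11}$: their atoms $C_2$ and $2C_2$ occur only on the zero-area axis $b=0$ (intervals XIII--XIV) and so have no analogue in either limiting system. For these I recover the frames through the degeneration $(a_n, b_n) \to (a, 0)$ used in the proof of the first theorem, which realises the $b=0$ tori as limits of the Kovalevskaya--Sokolov tori in the adjacent regions; this accounts for the region subscripts $(3)$ and $(2,5)$ attached to the $\gamma_{10}, \gamma_{11}$ entries, the compound subscript reflecting that $\lambda_{\gamma_{11}}$ must be tracked through the passage between regions $2$ and $5$. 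To pin down the $C_2$-separatrix cycles relative to the $\beta$- and $\gamma$-cycles already located, I invoke the decomposition of the saddle--saddle singularity $B \times C_2$ into two $B \times B$ singularities recorded in the Corollary above. The remaining delicate step is purely bookkeeping: one must verify that the three limits ($\varkappa \to -0$, $\varepsilon_0 \to 0$, $b \to 0$) and the Poisson diffeomorphism of Assertion~\ref{A:PoissonMap} all induce the same identification of $H_1(T^2;\mathbb{Z})$ over the overlap regions, so that each subscript refers to a single coherent reference torus.
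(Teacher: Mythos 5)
Your general strategy --- transporting admissible coordinate systems along the limits to previously studied systems and reading the result off the lattice picture of Fig.~\ref{Fig:cycles} --- matches the paper's first step, and for the six arcs $\alpha_3, \beta_4, \gamma_{10}, \gamma_{11}, \delta_3, \xi_6$ this is exactly what is done: the arc and the bifurcation type survive the limit $\varepsilon_0 \to 0$ to the Sokolov case, so their admissible coordinate systems are simply taken from \cite{Morozov04}. You have, however, misplaced the difficulty. You single out $\gamma_{10}, \gamma_{11}$ as the arcs with ``no analogue in either limiting system'' and propose to recover them through the degeneration $(a_n,b_n)\to(a,0)$ and the decomposition $B\times C_2 \to 2(B\times B)$; in fact the $C_2$ and $2C_2$ bifurcations are present in the Sokolov case itself and their frames are already available in \cite{Morozov04}. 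The arcs that genuinely require new work are $\gamma_8$ and $\gamma_9$: they are ``new'' (so the limit $\varkappa\to -0$ to the classical Kovalevskaya case gives nothing), and they are not among the arcs inherited from the Sokolov limit, yet your proposal never explains how their second basis cycles are computed. The paper obtains them by analysing the saddle--saddle singularity $w_6$ of type $B\times B$ by the loop-molecule method, exactly as the singularity $U_2$ ($y_3$) was treated in \cite{BFR}, combined with the relations $\lambda_{\xi_1}=\pm\lambda_{\alpha_1}$ and $\lambda_{\gamma_5}=\pm\lambda_{\alpha_1}$ coming from the center--saddle ($A\times B$) points $w_3$ and $y_{12}$. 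Without an argument of this kind the frames for $\gamma_8,\gamma_9$ (and hence the denominator of $\beta_4$, which refers to $\lambda_{\gamma_9}$) are not determined.

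A second gap is the sign determination. You propose to fix the signs in $-\lambda_{\gamma_8}$, $-\lambda_{\beta_1}$, $-\lambda_{\beta_3}$ by demanding that the resulting frame be ``positively oriented and admissible''; this is not sufficient, because the orientations of the $\lambda$-cycles on the two critical circles of a $C_2$ atom are dictated by the direction of $\mathrm{sgrad}\,H$ on those circles, not by a normalisation you are free to impose. The paper resolves this with a concrete dynamical input: the symmetry $(\alpha,x)\mapsto(-\alpha,-x)$ of the Sokolov case permutes the tori of families (2), (3), (5), (6) while preserving $\mathrm{sgrad}\,H$, which forces the flow directions on the two critical circles of the $C_2$ atoms for $\gamma_{10},\gamma_{11}$ to agree and hence pins down $\lambda_{\xi_1}=\lambda_{\gamma_5}$ and $\lambda_{\xi_2}=\lambda_{\gamma_6}$ after the perturbation $C_2\to B - B$. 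You would need to supply an argument of this sort, or an equivalent computation of the flow directions, to justify the signs appearing in the statement.
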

\begin{figure}
\minipage{1.\textwidth}
\includegraphics[width=\linewidth]{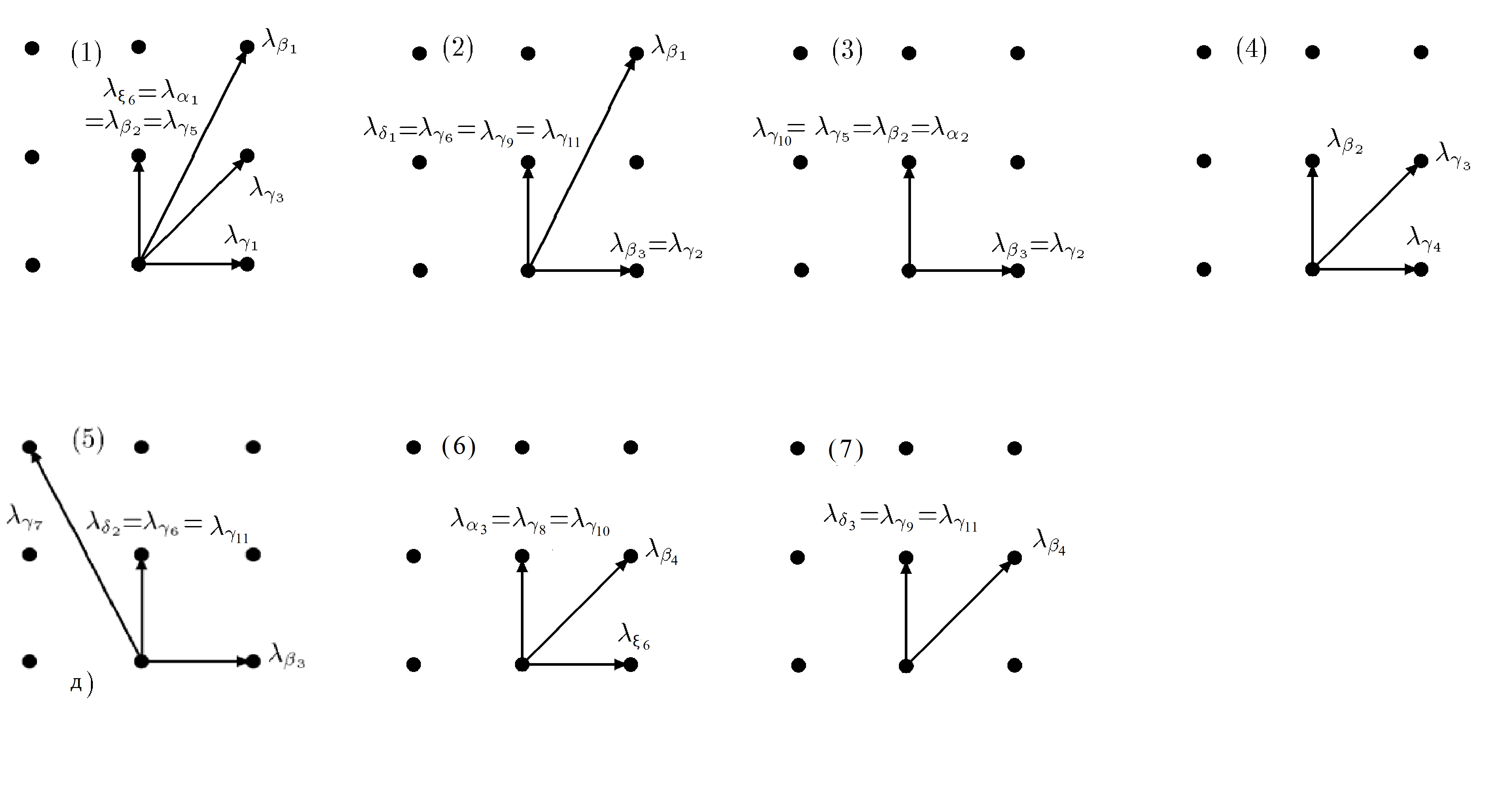}
   \caption{Uniquely defined $\lambda$-cycles for families (1)-(7) of Liouville tori}
   \label{Fig:cycles}
\endminipage
\end{figure}
\begin{remark}

 The cycle $\lambda_{\beta_4}$ on tori of the family (6) (i.e. $\lambda_{\beta_1}$ on family I of tori in \cite{Morozov04}) and $\lambda_{\beta_4}$ on tori of the family (7) (i.e. $\lambda_{\beta_4}$ on family V of tori in \cite{Morozov04}) are expressed similarly by a choose of basis of the lattice. \end{remark}

\begin{proof}

1. If an arc  of $\Sigma$ and the type of bifurcation do not change when passing to a limiting case, then the corresponding admissible coordinate system also remains the same.

Therefore the admissible coordinate systems for the arcs $\alpha_3, \beta_4,  \gamma_{10}, \gamma_{11}, \delta_3, \xi_6$ are known from \cite{Morozov04}.

2. Singularities $w_{3}$ and $y_{12}$ are center-saddle $A \times B$ singularities, thus $\lambda_{\xi_1} = \pm \lambda_{\alpha_1}$ and $\lambda_{\gamma_5} = \pm \lambda_{\alpha_1}$, i.e. $\lambda_{\xi_1} = \pm \lambda_{\gamma_5}$

The symmetry $(\alpha, x) \longrightarrow (-\alpha, -x)$ maps tori of families (2), (3), (5), (6) one to another and preserves $\mathrm{sgrad}\,H$ in the Sokolov case. It means that the direction of $\mathrm{sgrad}\,H$ coincides on two critical circles of the $C_2$ atom for the arcs $\gamma_{10}, \gamma_{11}$.

Thus $\lambda_{\xi_1} = \lambda_{\gamma_5}$ and $\lambda_{\xi_2} = \lambda_{\gamma_6}$ for the resulting foliations $B (\xi_1) - B (\gamma_5)$ and  $B (\gamma_6) - B (\xi_2)$ after the perturbation of atoms  $C_2 (\xi_3)$ and $2C_2 (\xi_4)$.

3. Admissible coordinate systems for two arcs $\xi_1, \xi_2$ can be calculated by considering  a saddle-saddle singularity $w_6$ of the type $B \times B$ the same way as the singularity $U_2$ ($y_3$) was studied in \cite{BFR}.

\end{proof}

\begin{remark} Using Theorem~\ref{T:AdmCoord} we can calculate numerical marks $r, \varepsilon, n$ of the Fomenko--Zieschang invariants by well-known formulae, see \cite{BF}. (Molecules are directed by the acsending of the integral $K$.)
\end{remark}

We use notations from \cite{Ryabov16} for classes of regular foliations on isoenergy $Q^3$.

\begin{theorem}
1. For $b \ne 0$ all 25 classes of regular foliations on isoenergy $Q^3$ in the Kovalevskaya system on $\mathrm{so}(3, 1)$ are non-equivalent, i.e. have different Fomenko--Zieschang invariants:
\begin{itemize}
\item 10 classes $\mathbb{A}_1, \mathbb{A}_2, \mathbb{A}_3, \mathbb{B}_1, \mathbb{B}_2, \mathbb{C}_1, \mathbb{C}_2, \mathbb{B}_3, \mathbb{D}_1, \mathbb{A}_4$ coincide with classes $A$-$J$ respectively for the Kovalevskaya case on $\mathrm{e}(3)$,
\item 3 classes $\mathbb{E}_1, \mathbb{E}_2, \mathbb{F}_1$ coincide with classes $A, B, C$ of Sokolov case,
\item 6 classes $\mathbb{B}_4, \mathbb{B}_5, \mathbb{C}_3, \mathbb{C}_4, \mathbb{F}_2, \mathbb{G}$ can be obtained from classes $E, F, I, H, D, G$ of Sokolov case by a typical perturbation $C_2 \longrightarrow B \cfrac{r = \infty}{\varepsilon = 1} B$,
\item Fomenko-Zieschang invariants of classes $\mathbb{A}_5, \mathbb{A}_6, \mathbb{A}_7, \mathbb{A}_8, \mathbb{B}_6, \mathbb{D}_2$ are shown in Fig.~\ref{Fig:invariants}.
\end{itemize}

2. In the case of $b =0$ every foliation belongs to one of classes $A$-$I$ of Sokolov case, $A$-$D$ of Kovalevskaya case or $\mathbb{A}_5, \mathbb{A}_6$.

3. Foliations $\mathbb{A}_5$ and $\mathbb{A}_5$ belong to the classes $L_{26}$ and $L_7$ of Kovalevskaya integrable case on $\mathrm{so(4)}$.
\end{theorem}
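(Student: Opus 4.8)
The plan is to assemble the complete Fomenko--Zieschang invariant (the marked molecule) for each of the 25 regular classes by combining the rough molecules already listed in \cite{Ryabov16} with the numerical marks $r,\varepsilon,n$, and then to verify that the resulting 25 marked molecules are pairwise distinct. For the ``new'' arcs the marks follow from Theorem~\ref{T:AdmCoord}: having expressed each admissible coordinate system through the uniquely defined $\lambda$-cycles of Fig.~\ref{Fig:cycles}, one writes down the gluing matrix on every edge and applies the standard formulae for $r$, $\varepsilon$ and $n$ from \cite{BF}. For the ``old'' arcs the marks are inherited, via the Poisson diffeomorphism of Theorem~\ref{A:PoissonMap} and the limit $\varepsilon_0\to 0$, from the already known invariants of the Kovalevskaya case on $\mathrm{e}(3)$ (\cite{BFR}) and of the Sokolov case (\cite{Morozov04}).

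First I would fix, region by region (regions I--XI of Fig.~\ref{Fig:regions_Kov_so31}), the global orientation of the molecule by the ascending direction of $K$ together with a coherent choice of admissible bases on all boundary tori, so that the $n$-marks of the families become well defined. The identifications in the first three bullet points then follow by direct matching: the ten classes listed in the first bullet reproduce the Kovalevskaya--$\mathrm{e}(3)$ molecules $A$--$J$ because the relevant arcs and atoms are ``old'' and unchanged under the Poisson map, while $\mathbb{E}_1,\mathbb{E}_2,\mathbb{F}_1$ reproduce the Sokolov molecules $A,B,C$ in the limit governed by the Lemma on regions $5,10,11$. For the six classes $\mathbb{B}_4,\mathbb{B}_5,\mathbb{C}_3,\mathbb{C}_4,\mathbb{F}_2,\mathbb{G}$ I would track how the marks transform under the typical perturbation $C_2\longrightarrow B\,\frac{r=\infty}{\varepsilon=1}\,B$, using the relations $\lambda_{\xi_i}=\lambda_{\gamma_j}$ established in step~2 of the proof of Theorem~\ref{T:AdmCoord}; this shows that each such class arises from the corresponding Sokolov molecule $E,F,I,H,D,G$ by replacing a single $C_2$ atom and recomputing the affected marks. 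The remaining six molecules $\mathbb{A}_5,\mathbb{A}_6,\mathbb{A}_7,\mathbb{A}_8,\mathbb{B}_6,\mathbb{D}_2$ are then read off directly and displayed in Fig.~\ref{Fig:invariants}.

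To prove non-equivalence in Part~1 I would invoke that the marked molecule is a complete invariant of the Liouville foliation up to leaf-preserving homeomorphism, so it suffices to check that no two of the 25 marked molecules coincide. This is a finite inspection: classes with different rough molecules are immediately distinct, and those sharing a rough molecule (for instance within the $\mathbb{A}$-family, or within the perturbed $C_2$-family) are separated by at least one differing mark $r$, $\varepsilon$ or $n$. For Part~2 ($b=0$) I would use the compactness and limit argument already employed for the bifurcation diagrams: each regular isoenergy surface over an interval XII--XVII degenerates from a nearby $b\ne 0$ surface, so its molecule is forced to be one of the Sokolov classes $A$--$I$, a Kovalevskaya class $A$--$D$, or $\mathbb{A}_5,\mathbb{A}_6$, the new arcs $\gamma_{10},\gamma_{11}$ and points $w_7,w_8,w_9,w_{10}$ contributing no further regular class. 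Finally, Part~3 is a direct comparison, mark for mark, of the marked molecules of $\mathbb{A}_5$ and $\mathbb{A}_6$ with the molecules $L_{26}$ and $L_7$ of the Kovalevskaya case on $\mathrm{so}(4)$ from \cite{Kozlov14},\cite{LJM},\cite{Sb}.

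The hardest part will be the consistent global computation of the $n$-marks and the $\varepsilon$-marks for the perturbed $C_2$-classes, since the perturbation $C_2\to B\,\frac{r=\infty}{\varepsilon=1}\,B$ creates a new internal edge whose admissible bases must be reconciled with those on the untouched edges; an error in the choice of basis would propagate into the $n$-mark of the entire family. The $\lambda$-cycle identifications of Theorem~\ref{T:AdmCoord} are precisely what make this bookkeeping determinate, and I expect the non-equivalence claim to reduce, once these marks are fixed, to the routine but lengthy tabular comparison sketched above.
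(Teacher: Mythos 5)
Your proposal follows essentially the same route the paper takes: the paper gives no separate proof of this theorem but derives it exactly as you describe, by combining the rough molecules of \cite{Ryabov16} with the marks computed from the admissible coordinate systems of Theorem~\ref{T:AdmCoord} via the standard formulae of \cite{BF} (as stated in the remark following that theorem), inheriting the ``old'' marks from the Kovalevskaya and Sokolov limiting cases, and then comparing the resulting marked molecules. Your plan is a faithful, somewhat more explicit, reconstruction of that argument.
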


\begin{remark}
Two foliations of the same Liouville class have the same structure of closures of almost all their trajectories.
\end{remark}

\begin{figure}
\minipage{0.75\textwidth}
\includegraphics[width=\linewidth]{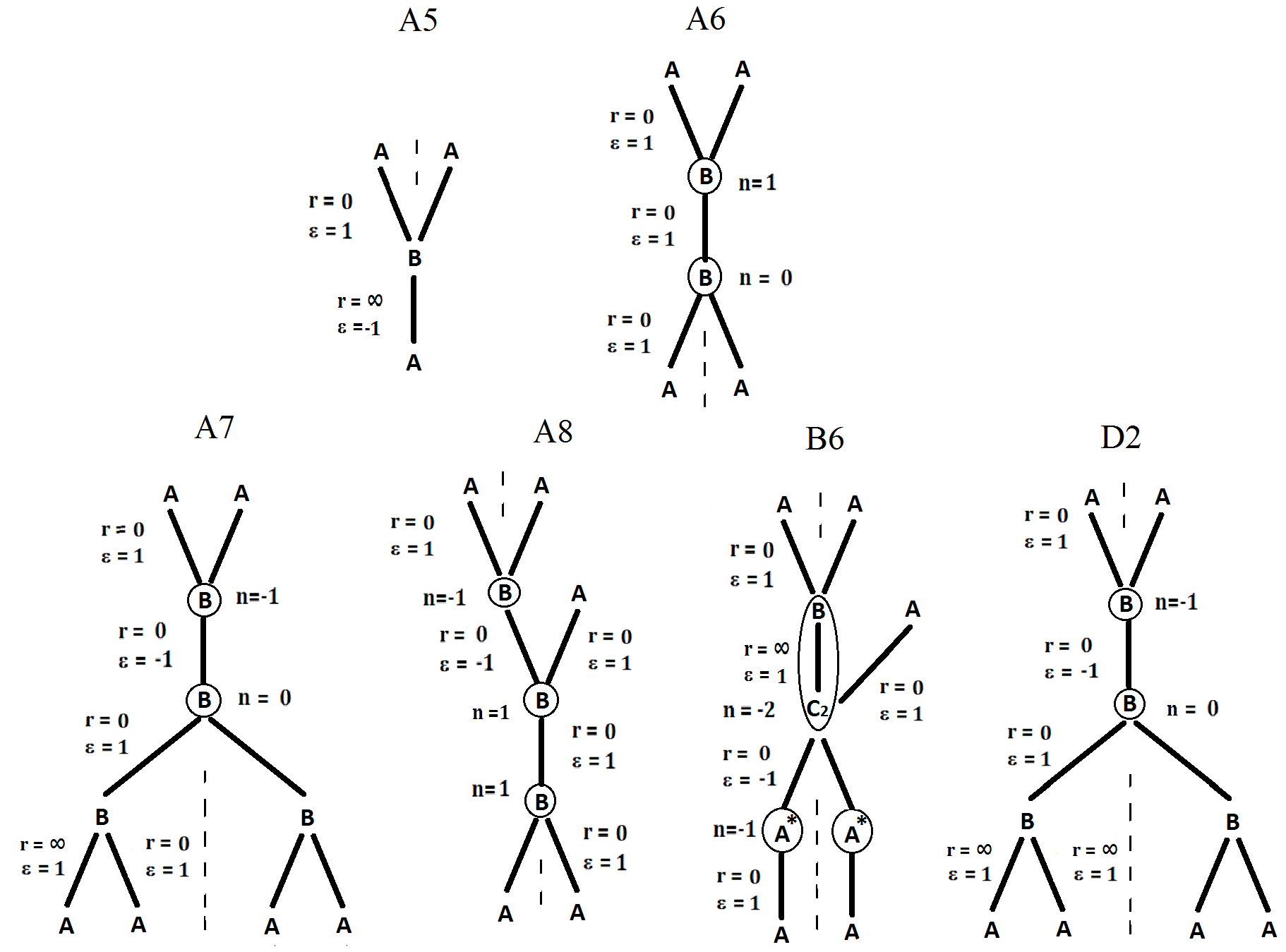}
   \caption{New invariants of the Kovalevskaya case on $so(3, 1)$}
   \label{Fig:invariants}
\endminipage
\end{figure}

\section{Acknowledgements}

This work was supported by the Russian Science Foundation (project no. 17-11-01303). Author is grateful to I.~Kozlov for fruitful discussions and help.





\end{document}